\newtheorem{theorem}{Theorem}[section]
\newtheorem{proposition}{Proposition}
\theoremstyle{definition}
\newtheorem{definition}[theorem]{Definition}
\newtheorem{example}[theorem]{Example}
\theoremstyle{remark}
\newtheorem{notation}{Notation}
\numberwithin{equation}{section}
\newcommand{\gen}{\multimapinv}
\newcommand{\eq}{\multimapboth}
\newcommand{\gv}{\coloneq}
\newcommand{\eqv}{\coloneq:}
\newcommand{\eqar}{\stackrel{\mbox{\rm ar}}\longleftrightarrow}
\begin{document}

\title{Geometric and arithmetic relations concerning origami}


\author{Jordi Gu\`ardia}
\address{ Departament de Matem\`atica Aplicada IV, Escola Polit\`ecnica Superior d'Enginyeria de Vilanova i la Geltr\'u, Universitat Polit\`ecnica de Catalunya.  Avda. V\'ictor Balaguer S/S - 08800
 Vilanova i la Geltr\'u, Spain}
\curraddr{}
\email{guardia@ma4.upc.edu}
\thanks{The first author is partially supported by MTM2012-34611 from Spanish MEC}

\author{Eul\`alia Tramuns}
\address{Departament de Matem\`atica Aplicada IV, Universitat Polit\`ecnica de Catalunya, Barcelona Tech.
C/ Jordi Girona 1-3, M\`odul C3, Campus Nord, 08034 Barcelona, Spain}
\curraddr{}
\email{etramuns@ma4.upc.edu}
\thanks{The second author is partially supported by MTM2011-28800-C02-01 from Spanish MEC}

\subjclass[2000]{Primary }

\date{}

\begin{abstract}
We present a formalization of geometric instruments that considers separately geometric and arithmetic aspects of them. We introduce the concept of \emph{tool}, which formalizes a physical instrument as a set of \emph{axioms} representing its geometric capabilities. We also define a \emph{map} as a tool together with a set of points and curves as an initial reference. We rewrite known results using this new approach and give new relations between origami and other instruments, some obtained considering them as tools and others considering them as maps.
\end{abstract}

\maketitle

\section{Introduction}

The determination of constructible numbers with origami is a problem with an interesting development, that was completely solved only after the axiomatization proposed by Huzita-Justin (cf. \cite{Huzita}, \cite{Justin}). The work by Alpering-Lang (\cite{AlperinLang}) proved that the list of possible one-fold axioms was complete and settled a new scenario, where the role of new axioms was still emphasized. The axiomatic viewpoint seems a natural perspective for the study of other geometrical instruments. 

In this work we present a general purpose formal language for the axiomatization of geometrical instruments. Our formalization takes into account both the geometric and arithmetic properties of the instruments: the concept of {\em tool} formalizes an instrument as a set of axioms, while the concept of {\em map} formalizes the constructible points and curves of the instrument. Our formalization provides a natural frame to express well-known results, but also leads to new relations between instruments.

The key concepts of our language are introduced in section 2. In section $3$, we define \emph{geometric equivalence} and \emph{virtual equivalence of tools} and prove relations between the tools described in this work. In section $4$ we present an equivalence relation between maps and define an \emph{arithmetical equivalence of tools}. We conclude with an arithmetic classification of the tools described.

A more extended version of this work can be found in \cite{TramunsThesis} as an evolution of a previous work (cf. \cite{Tramuns}).

\newpage
\section{Definitions}
\subsection{Axioms}
\begin{definition}
A \emph{construction axiom \bf{C}}
is an elementary geometric process that generates a finite ordered set of curves from a non empty, finite and ordered set
of points and curves.

An \emph{intersection axiom {\bf I}}
is a an elementary geometric process that generates a finite ordered set of points from a non empty, finite and ordered set of curves and points.

\end{definition}
The notation we will use for axioms is $O_1,O_2,\dots, O_r=\textbf{AxiomName}(I_1,I_2,\dots, I_s)$ where the $I_j$ are the given curves and points and the  $O_i$ are the elements generated by the axiom.

We collect in the Annex a list of basic axioms which formalize the common tasks performed by the geometric instruments that will be considered in this work (mainly ruler, compass, origami). A more comprehensive list of axioms can be found in \cite{TramunsThesis}.


\subsection{Tools}

We formalize geometric instruments in terms of their capabilities, i.e., by means of the axioms they can perform:

\begin{definition}
A \emph{tool} $T$ is a couple $\langle
\mathcal{C},\mathcal{I}\rangle$, where
 $\mathcal{C}$ is a non empty finite set of construction axioms and $\mathcal{I}$ is a non empty finite set of
 intersection axioms.
\end{definition}


Some basic examples of tools are:

\begin{itemize}
\item Ruler $${\rm {\bf \mathcal R}:=\langle {\bf \{Line\},\{LineIntersect\}\rangle}}$$
\item Compass $$
{\bf \mathcal C}:=\langle{\bf \{Circle,
RadiusCircle\},\{CircleIntersect\}\rangle}$$
\item Ruler and compass $$\begin{array}[b]{r@{}l} {\bf \mathcal{RC}:=\langle}&
{\rm\bf\{Line, Circle, RadiusCircle\}},\\
&  {\rm\bf\{ LineIntersect, CircleIntersect,
LineCircleIntersect\}\rangle}.
\end{array}$$
\item Euclidean compass
\begin{center}
${\bf\mathcal{EC}:=}\langle$\{{\bf Circle}\}, \{{\bf
CircleIntersect}\}$\rangle$.
\end{center}
\item Ruler and Euclidean compass
$$\begin{array}[b]{r@{}l} {\bf \mathcal{REC}}:=\langle&
{\rm\bf\{Line, Circle\}},\\
&  {\rm\bf\{ LineIntersect, CircleIntersect,
LineCircleIntersect\}\rangle}.
\end{array}$$
\item Origami \begin{center}
${\bf\mathcal{O}:=}\langle$\{{\bf Line, PerpendicularBisector,
Bisector, Perpendicular, Tangent, CommonTangent,
PerpendicularTangent}\}, \{{\bf LineIntersect}\}$\rangle$.
\end{center}
\item Thalian Origami (\cite{Alperin})
\begin{center}
${\bf\mathcal{TO}:=}\langle$\{{\bf Line, PerpendicularBisector}\}, \{{\bf LineIntersect}\}$\rangle$.
\end{center}
\item Pythagorean origami (\cite{Alperin})
\begin{center}
${\bf\mathcal{PO}:=}\langle$\{{\bf Line, PerpendicularBisector, Bisector}\},
\{{\bf LineIntersect}\}$\rangle$.
\end{center}
\item Conics (\cite{Videla})
$$\begin{array}[b]{r@{}l} {\bf \mathcal{CO}}:=\langle&
{\rm\bf\{Line, Circle, RadiusCircle, Conic\}},{\rm\bf\{ LineIntersect, }\\
& {\rm\bf CircleIntersect,LineCircleIntersect,ConicLineIntersect,}\\
& {\rm\bf ConicCircleIntersect, ConicIntersect\}\rangle}.
\end{array}$$

\end{itemize}

\subsection{Constructions}

\begin{definition}
A \emph{construction} of a set of points and lines $V$  from
$U_0$ is a finite sequence
$$
C(U_0;V)=\{O_1=A_1 (U_1), ..., O_n=A_n (U_n)\},
$$
where
\begin{itemize}
\item[-] $U_0$ is an initial ordered non empty set of points and curves;
\item[-] $A_1,\dots, A_n$ are axioms;
\item[-] Every $U_k$ is a subset of $U_0\cup \dots\cup U_{k-1}\cup O_1\cup \dots\cup O_{k-1}$;
\item[-] $V\subset O_1\cup \dots \cup O_n$, but $V\not\subset O_1\cup\dots \cup O_{n-1}$.
\end{itemize}

We  say that $C(U_0;V)$ is a construction with the tool $\mathcal E=\langle{\mathcal C},{\mathcal I}\rangle$
if $A_1,\dots, A_k\in {\mathcal C}\cup{\mathcal I}$, and we write
$C(U_0;V)\in {\mathcal E}$ in this case.

\end{definition}

\begin{notation}
To simplify the notation, we enumerate the elements of the sets $U_0$, $V$ in a single list, using a semicolon to separate the last element of $U_0$ and the first of $V$.
\end{notation}

\begin{example}
Given a line $\ell$ and a point $P$ not on $\ell$, the construction $\textbf{Parallel}$ generates the parallel $\ell_2$ to the line $\ell$ passing through the point $P$ (:
$$
\begin{array}{r@{}l}
\textbf{\emph{Parallel}}(P,\ell;\ell_2)=\left\{\right.&
\ell_1=\textbf{Perpendicular}(\ell,P),\\
& \left.\ell_2=\textbf{Perpendicular}(\ell_1,P)\right\}.
\end{array}
$$
Clearly $\textbf{\emph{Parallel}}(P,\ell;\ell_2)\in {\mathcal O} $.
\end{example}

A wide catalog of constructions is given in \cite{TramunsThesis}.

\subsection{Maps}

\begin{definition}
A \emph{map} is a pair ${\textdubf M}=({\mathcal E},\mathcal{U}_0)$
composed by a tool ${\mathcal E}$  and a non empty finite  {\em initial set} $\mathcal{U}_0$ of points and curves.
\end{definition}

\begin{notation}
Given a set $U$ of points and curves, we will write  $U=[C,P]$ to specify its subset $C$ of curves and its subset $P$ of points.
\end{notation}

\begin{definition}
Let ${\textdubf M}=({\mathcal E},\mathcal{U}_0)$ be a map
with ${\mathcal E}=\langle \mathcal{C},\mathcal{I}\rangle$. The
\emph{sequence of layers}
$\mathcal{U}_n=\{[\mathfrak{C}_n,\mathfrak{P}_n]\}_{n\in\mathbb{N}}$ is the sequence defined by:
\begin{itemize}
\item[i)] $\mathcal{U}_0=[\mathfrak{C}_0,\mathfrak{P}_0]$,
\item[ii)] $\mathfrak{C}_n$ is the union of $\mathfrak{C}_{n-1}$ with the set of curves obtained
applying all construction axioms from $\mathcal{C}$
in all possible ways to the elements of $\mathcal{U}_{n-1}$,

\item[iii)] $\mathfrak{P}_n$ is the union of $\mathfrak{P}_{n-1}$ with the set of points that is obtained applying all intersection axioms from
 $\mathcal{I}$
in all possible ways to the elements of
$[\mathfrak{C}_{n},\mathfrak{P}_{n-1}]$.

\end{itemize}
We write
$\mathcal{U}^{\textdubf M}=[\mathfrak{C}^{\textdubf M},\mathfrak{P}^{\textdubf M}]:=\cup_{n=0}^\infty
\mathcal{U}_n$ to denote the \emph{set of constructible points and curves with
${\textdubf M}$}.
A map is \emph{infinite} if ${\mathcal U}^{{\textdubf M}}$ is infinite.
\end{definition}

Table \ref{cataleg_conjuntsfinals} describes the set of constructible points  
of the tools introduced in previous section. As usual, $\mathbb{P}$ denotes the Pythagorean closure of $\mathbb{Q}$ (i.e., the smallest extension of $\mathbb{Q}$ where every sum of two squares is a square); the field of Euclidean numbers (i.e., the smallest subfield of $\overline{\mathbb{Q}}$ closed under square roots) is denoted by  $\mathscr{C}$, and $\mathscr{O}$ is the field of origami numbers (i.e., the smallest subfield of $\overline{\mathbb{Q}}$ which is closed under the operations of taking square roots, cubic roots and complex conjugation).

\begin{table}[h!]\centering
\begin{tabular}{|l|c|c|c|}
\hline
Map & Initial set & $\mathfrak{P}^{\textdubf M}$ & References\\
\hline Ruler  & $\{1,2,i,2i\}$& $\mathbb{Q}(i)$& \cite[page 79]{Martin} \\
Compass   & $\{0,1\}$ & $\mathscr{C}$&  \cite[chap. 3]{Martin} \\
Ruler and compass   & $\{0,1\}$  &$\mathscr{C}$&   \cite[page 261]{Cox}\\
Euclidean compass   & $\{0,1\}$ & $\mathscr{C}$ & \cite[page 7]{Martin}\\
Ruler and euclidean compass   & $\{0,1\}$ &$\mathscr{C}$ & \cite[page 7]{Martin}  \\
Origami  & $\{0,1\}$& $\mathscr{O}$& \cite{Alperin}\\
Pythagorean Origami & $\{0,1\}$ &$\mathbb{P}(i)$&\cite[Theorem 3.3]{Alperin}  \\
Conics   & $\{0,1\}$&$\mathscr{O}$ & \cite{Videla} \\
\hline
\end{tabular}
\caption{Sets $\mathfrak{P}^{\textdubf M}$ for different maps}
\label{cataleg_conjuntsfinals}
\end{table}
For a description of the set of constructible points of the Thalian origami map, see \cite[Theorem 3.3]{Alperin}.

\section{Geometric relations between tools}

Since we have defined tools in terms of their capabilities, it is natural to classify them according to this philosophy. We will consider the arithmetic capabilities in the next section, and we concentrate now on the geometric capabilities. It seems reasonable to say that two tools are equivalent if they can solve the same problems, but this has to be carefully defined. 
For instance, the second problem of Euclid is normally formulated as it follows: given three points $A, B, C$ in general position, one has to determine a point $D$ such that the segments $AD$ and $BC$ are congruent. Clearly, the point $D$ is not uniquely determined, and indeed, there exist several different constructions generating different solution points. This kind of situation leads to introduce the following definition:

\begin{definition}
We say that the constructions $C(U_0;V)$ and $C'(U_0;V')$ are
\emph{equivalent}, $C(U_0;V)\sim C'(U'_0;V')$,
if $V$ and $V'$ have the same geometric links with $U_0$.

A \emph{problem} $P(U_0;V)$ is an equivalence class of constructions
under this relation.

A \emph{solution} of the problem $P(U_0;V)$ is any representant of this
equivalence class, that is, any construction
$C(U_0;V)$ of $V$ from $U_0$.

A problem can be solved with $\mathcal E$ if it has a solution
$C(U_0;V)\in \mathcal E$.
\end{definition}

\begin{definition}
The tool $\mathcal E$ generates the tool $\mathcal E'$,  $\mathcal E\gen \mathcal E'$ if any problem that can be solved with $\mathcal E'$ can also be solved with $\mathcal E$.
Two tools ${\mathcal E}$ and ${\mathcal E'}$ are {\em geometrically equivalent},  $\mathcal E\eq \mathcal E'$ if they solve the same problems.
\end{definition}

Obviously, if the set of axioms of a tool  $\mathcal E'$ is a subset of the axioms of the tool $\mathcal E$, then $\mathcal E\gen \mathcal E'$. Hence we have the following relations



$$\mathcal CO \gen \mathcal RC \gen \mathcal REC \gen {\mathcal R};$$
$${\mathcal O} \gen \mathcal PO \gen \mathcal TO \gen {\mathcal R}.$$

\begin{proposition}
The tool $\mathcal TO$ does not generate the tools $\mathcal PO$, ${\mathcal O}$.
\end{proposition}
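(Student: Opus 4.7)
The plan is to exhibit a single problem solvable by $\mathcal{PO}$ (hence also by $\mathcal{O}$, which contains every axiom of $\mathcal{PO}$) but not by $\mathcal{TO}$. The natural candidate is the angle bisector problem: given three non collinear points $O,A,B$, construct the line bisecting the angle $\angle AOB$. This is immediate with $\mathcal{PO}$ by applying $\textbf{Line}$ twice to produce the lines $OA$ and $OB$, and then applying $\textbf{Bisector}$ to them; the same construction runs in $\mathcal{O}$.

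To argue that this problem cannot be solved by $\mathcal{TO}$, I would instantiate the problem with the concrete initial data $O=(0,0)$, $A=(1,0)$, $B=(1,1)$, whose coordinates all lie in $\mathbb{Q}$, and prove the following arithmetic invariance lemma by induction on the length of a $\mathcal{TO}$-construction: if every point of $U_0$ has coordinates in a subfield $K\subseteq\mathbb{R}$, then every line constructible by $\mathcal{TO}$ admits an affine equation with coefficients in $K$, and every constructible point has coordinates in $K$. The inductive step is elementary: the line through two points of $K^2$ has an equation over $K$; the perpendicular bisector of $(x_1,y_1),(x_2,y_2)\in K^2$ is described by $(x_2-x_1)(x-(x_1+x_2)/2)+(y_2-y_1)(y-(y_1+y_2)/2)=0$, whose coefficients are in $K$; and Cramer's rule shows that the intersection of two lines defined over $K$ lies in $K^2$. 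No square roots are ever forced by the $\mathcal{TO}$ axioms.

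Specialised to our initial data $\{O,A,B\}$, every line producible with $\mathcal{TO}$ therefore admits a defining equation with rational coefficients. However, the (internal) bisector of $\angle AOB$ passes through the origin with slope $\tan(22.5^{\circ})=\sqrt{2}-1\notin\mathbb{Q}$ and the external bisector has slope $-\cot(22.5^{\circ})=-(\sqrt{2}+1)\notin\mathbb{Q}$, so neither of the two lines in the equivalence class of the problem admits a rational equation. This contradicts the invariance lemma, and so the bisector problem has no solution in $\mathcal{TO}$. Since it has solutions in both $\mathcal{PO}$ and $\mathcal{O}$, we conclude that $\mathcal{TO}$ does not generate $\mathcal{PO}$ nor $\mathcal{O}$. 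The only delicate step is the arithmetic invariance lemma; once it is in place, the irrationality of $\tan(22.5^{\circ})$ closes the proof in one line.
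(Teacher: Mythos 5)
Your proof is correct, but it takes a genuinely different route from the paper's. The paper disposes of the proposition with a degeneracy argument: it takes as initial set two perpendicular lines together with their intersection point, observes that $\mathcal{PO}$ can then apply \textbf{Bisector} to produce new lines, whereas $\mathcal{TO}$ can produce nothing at all from this configuration (both \textbf{Line} and \textbf{PerpendicularBisector} require two points as input and only one point is available, and \textbf{LineIntersect} only returns the point already present). Your argument instead works with a nondegenerate, point-only initial set and an arithmetic invariance lemma: all three axioms of $\mathcal{TO}$ are rational operations on coordinates, so starting from $O=(0,0)$, $A=(1,0)$, $B=(1,1)$ every constructible line is defined over $\mathbb{Q}$, while both bisectors of $\angle AOB$ have the irrational slopes $\sqrt{2}-1$ and $-(\sqrt{2}+1)$. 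Both proofs are valid. The paper's is shorter and purely combinatorial, but it leans on an initial set containing curves and on $\mathcal{TO}$ being completely stuck there; yours is slightly longer but more robust, since it shows the obstruction persists even for generic point-only data, and it anticipates the arithmetic viewpoint of Section 4 (it is essentially a special case of Alperin's description of the Thalian numbers as a field closed only under rational operations). The one step you should make explicit is that the equivalence class of the bisector problem contains only the two bisector lines (internal and external), so that checking irrationality of both slopes really does exhaust all solutions; with that said, the argument is complete.
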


\begin{proof}
It is enough to see that $\mathcal TO$ does not generate $\mathcal PO$.
Let us suppose we have two perpendicular lines and their point of intersection constructed. With the tool $\mathcal PO$ and the use of axiom {\bf Bisector} we can construct the bisectors of this couple of lines.
However with the tool $\mathcal TO$ we cannot construct neither a new line, nor a new point.
\end{proof}

While it is evident that the compass
$\mathcal C$ does not generate the ruler and compass $\mathcal RC$ since it cannot construct lines, both tools generate the same points from the initial set $\{0,1\}$. To describe this situation in a more general setting we introduce the following definitions:

\begin{definition}
A construction of points with points  (CPP) is a construction $CPP(U_0;V)$
where $U_0$ and $V$ contain only points.
\end{definition}

\begin{definition}
The tool $\mathcal E$
\emph{generates virtually} the tool $\mathcal E'$, $\mathcal E\gv\mathcal E'$, if any  CPP with
$\mathcal E'$ is equivalent to a construction with $\mathcal E$.
The tools ${\mathcal E}$ and ${\mathcal E'}$ are \emph{virtually
equivalents},
$\mathcal E\eqv\mathcal E'$ if $\mathcal E\gv\mathcal E'$ and $\mathcal E'\gv\mathcal E$.
\end{definition}

\begin{theorem}
The Origami tool  ${\mathcal O}$ generates virtually the ruler and Euclidean compass tool $\mathcal REC$.
\end{theorem}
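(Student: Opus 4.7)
The plan is to proceed by induction on the length of a CPP $C(U_0;V)\in\mathcal{REC}$, showing that each step of the construction can be replaced by an origami construction producing points with the same geometric links to $U_0$. Since $\mathcal{O}$ and $\mathcal{REC}$ share \textbf{Line} and \textbf{LineIntersect}, those steps translate verbatim. Circles generated during the $\mathcal{REC}$ construction are not built explicitly in the origami simulation: as the construction is a CPP, every such circle was obtained by \textbf{Circle} from previously constructed points, so it can be tracked by the pair (centre, point on the circle) that defined it, and only its effect on points has to be reproduced. It therefore suffices to simulate the two problematic intersection axioms, \textbf{LineCircleIntersect} and \textbf{CircleIntersect}.

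The axiom \textbf{LineCircleIntersect} is handled by a direct application of \textbf{Tangent}. Given a circle with centre $O$ through $P$ and a line $\ell$, the crease $c$ produced by $\textbf{Tangent}(P,O,\ell)$ passes through $O$ and folds $P$ onto a point $P'\in\ell$. Because $c$ passes through the centre, $|OP'|=|OP|$, so $P'$ lies on the circle as well. The image point $P'$ is recovered within $\mathcal{O}$ by dropping the perpendicular from $P$ to $c$ via \textbf{Perpendicular} and intersecting it with $\ell$ by \textbf{LineIntersect}; the two crease solutions supplied by \textbf{Tangent} produce both intersection points.

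For \textbf{CircleIntersect}, the strategy is to reduce to the previous case by first constructing the radical axis of the two circles: when the circles meet, this is the line through both intersection points, perpendicular to $O_1O_2$ at the point $K$ satisfying $|O_1K|=(d^2+r_1^2-r_2^2)/(2d)$, with $d=|O_1O_2|$ and $r_i=|O_iP_i|$. Once this line is available, a single application of the previous paragraph applied to either of the two circles produces both intersection points simultaneously.

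The principal technical obstacle is to show that $K$, and hence the radical axis, can be built using only the axioms of $\mathcal{O}$. The expression $(d^2+r_1^2-r_2^2)/(2d)$ is a rational combination of products of lengths already present in the configuration, so the task reduces to implementing multiplication and division of segment lengths inside $\mathcal{O}$. This is the classical intercept-theorem construction: the relevant segments are laid off on a pair of lines meeting at a common vertex, transferred when necessary by a suitable fold (for instance a \textbf{PerpendicularBisector} or a \textbf{Tangent} that carries an endpoint onto the target line), and combined using parallels (two applications of \textbf{Perpendicular}) and \textbf{LineIntersect}. Once $K$ is located on $O_1O_2$, the radical axis is given by \textbf{Perpendicular} at $K$ to $O_1O_2$, and the inductive step is complete, establishing $\mathcal{O}\gv\mathcal{REC}$.
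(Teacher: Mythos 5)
Your proposal is correct and follows essentially the same route as the paper: the paper reduces the claim to simulating \textbf{LineCircleIntersect} and \textbf{CircleIntersect} within $\mathcal O$ (citing Geretschl\"ager), and its explicit \textbf{\emph{LineCircleIntersectOrigami}} construction is exactly your tangent-fold-through-the-centre argument. Your radical-axis treatment of \textbf{CircleIntersect}, including the intercept-theorem arithmetic of segment lengths, is precisely the content the paper delegates to the reference, so you have in fact supplied more detail than the published proof.
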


\begin{proof} It is enough to describe constructions of the intersection of a line with a circle and of the intersection of two circles with ${\mathcal O}$.  We can find them in \cite{Geretschlager}. \end{proof}

As a more advanced example of construction, we describe the intersection of a circle and a line in our formal language. The construction \textbf{\emph{LineCircleIntersectOrigami}}(A,B,C,D;E,F)
generates
the intersection points of the line through the points $A$ and $B$ with the circle with center $C$ passing through $D$. 

This construction requires the point $C$ to be exterior to the parabola with focus $D$ and directrix the line through $A$ and $B$.
$$
\begin{array}{r@{}l}
\textbf{\emph{LineCircleIntersectOrigami}}(A,B,C,D;E,F)=\left\{\right.
& \ell_1= \textbf{Line}(A,B),\\
& \ell_2,\ell_3= \textbf{Tangent}(\ell_1,D,C),\\
& \ell_4=\textbf{Perpendicular}(\ell_2,D),\\
& \ell_5=\textbf{Perpendicular}(\ell_3,D),\\
& E= \textbf{LineIntersect}(\ell_1,\ell_4),\\
&  \left. F=\textbf{LineIntersect}(\ell_1,\ell_5)\right\}.
\end{array}
$$

Figure \ref{GeometricRelations} summarizes the main relations between tools.
\begin{figure}[h!]
\begin{center}
\epsfig{file=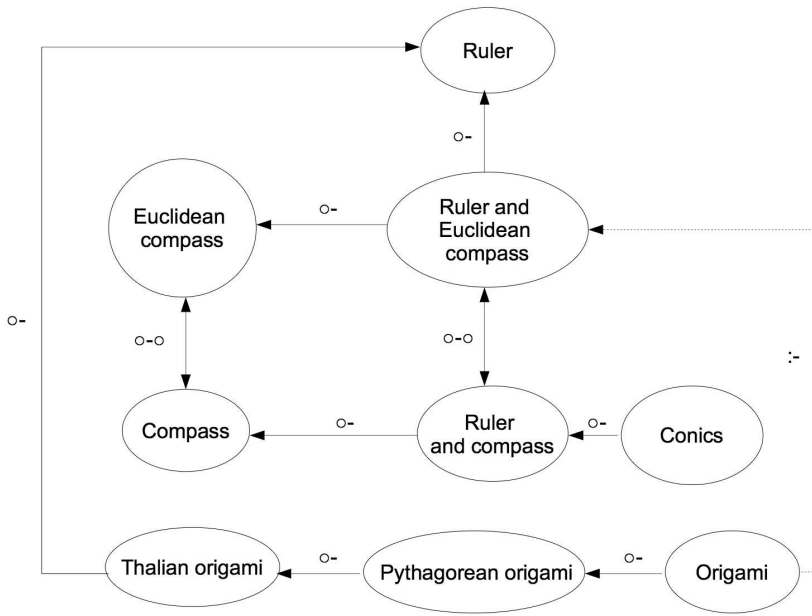,width=11.5cm}
\end{center}
\caption{Geometric relations between tools}
\label{GeometricRelations}
\end{figure}

\section{Arithmetic relations between maps and tools}

\begin{definition}
The maps $\textdubf M$ and $\textdubf M'$ are equivalent if $\mathfrak{P}^\textdubf M=\mathfrak{P}^{\textdubf M'}$.
\end{definition}

Figure \ref{Equivalent maps} shows the relations between sets of points of maps of Table \ref{cataleg_conjuntsfinals}.
\begin{figure}[h!]
\begin{center}
\epsfig{file=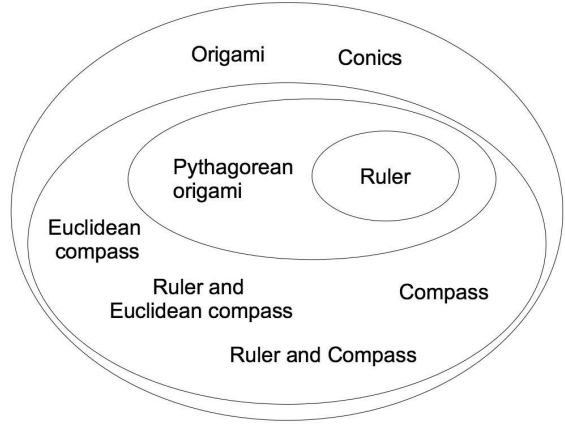,width=8cm}
\end{center}
\caption{Equivalence relations between maps}
\label{Equivalent maps}
\end{figure}

\begin{definition}
Two tools $\mathcal E, \mathcal E'$ are \textbf{\emph{arithmetically equivalents}}, $\mathcal E\eqar\mathcal E'$, if there exists finite sets of points ${\mathcal U}_0, {\mathcal U}_0'$ such that:
\begin{itemize}
\item[i)] The maps ${\textdubf M}=({\mathcal E},{\mathcal U}_0)$ and ${\textdubf M'}=({\mathcal E'},{\mathcal U}_0')$ are infinite and equivalent. 
\item[ii)] The construction of the set $\mathcal U^{\textdubf M}$ (resp. $\mathcal U^{\textdubf M'}$) needs the application of all the axioms of $\mathcal E$ (resp. $\mathcal E'$).
\end{itemize}
\end{definition}

In order to determine whether two given tools are arithmetically equivalent  two different approaches can be taken: one can consider the geometric properties of the tools  or one can associate particular maps to the tools and relate them. The following results illustrate these ideas.

\begin{theorem}[Mohr-Mascheroni]
\label{Mohr-Mascheroni}
The tools $\mathcal C$ and $\mathcal RC$ are arithmetically equivalent.
\end{theorem}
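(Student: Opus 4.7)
The plan is to take $\mathcal{U}_0 = \mathcal{U}_0' = \{0,1\}$ as initial data for both tools, and to verify the two requirements of arithmetic equivalence for the resulting maps $\mathbf{M} = (\mathcal{C},\{0,1\})$ and $\mathbf{M}' = (\mathcal{RC},\{0,1\})$. Condition (i) of the definition reduces to the assertion that $\mathfrak{P}^{\mathbf{M}} = \mathfrak{P}^{\mathbf{M}'}$, and by Table~\ref{cataleg_conjuntsfinals} both sets equal the field $\mathscr{C}$ of Euclidean numbers; since $\mathscr{C}$ is infinite, the two maps are indeed infinite. Condition (ii) is a direct check that, starting from $\{0,1\}$, every axiom of each tool has to be invoked to generate $\mathscr{C}$: for $\mathcal{RC}$ one needs $\mathbf{Line}$, $\mathbf{Circle}$ and $\mathbf{RadiusCircle}$ to produce new curves, together with all three intersection axioms; for $\mathcal{C}$ both $\mathbf{Circle}$ and $\mathbf{RadiusCircle}$ are required (the former yields circles with constructible radii, the latter transports radii between centres), together with $\mathbf{CircleIntersect}$.

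The substantive content lies in establishing the identity $\mathfrak{P}^{(\mathcal{C},\{0,1\})}=\mathscr{C}$ recorded in Table~\ref{cataleg_conjuntsfinals}. The inclusion $\mathfrak{P}^{(\mathcal{C},\{0,1\})}\subseteq\mathscr{C}$ is immediate since every axiom of $\mathcal{C}$ is an axiom of $\mathcal{RC}$. For the reverse inclusion I would show that any point produced by an $\mathcal{RC}$-construction starting from $\{0,1\}$ can also be produced by a $\mathcal{C}$-construction starting from the same data, even though the interpolating lines that appear in the $\mathcal{RC}$-construction can never be drawn with $\mathcal{C}$ itself.

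Concretely, the task reduces to providing compass-only constructions that return the same points as the axioms $\mathbf{LineIntersect}$ and $\mathbf{LineCircleIntersect}$. Namely: (a) the intersection of two circles (given directly by $\mathbf{CircleIntersect}$); (b) the intersection of the line through two given points with a given circle; and (c) the intersection of two lines, each specified by a pair of points. The classical vehicle for (b) and (c) is circle inversion. I would first describe a compass-only construction of the reflection of a point across the perpendicular bisector of a segment, then a compass-only construction of the inverse of a point with respect to a given circle (via the well-known ``two auxiliary circles through the centre'' trick), and finally use the fact that under inversion a line not through the centre becomes a circle through the centre, so that (b) and (c) are reduced to (a) after inverting, intersecting, and inverting back.

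The main obstacle is construction (c): to replace the intersection of two non-drawable lines by a sequence of compass-only operations one must choose a reference circle so that its centre lies on neither line, and one must ensure that all the auxiliary centres and radii required for the two successive inversions are themselves already present in the current layer $\mathcal{U}_k$. Verifying these non-degeneracy and constructibility conditions, and organising the inversions so that no step leaves $\mathcal{U}^{\mathbf{M}}$, is the delicate technical point. Once constructions (a)--(c) are in place they furnish a step-by-step simulation, at the level of points, of any $\mathcal{RC}$-construction by a $\mathcal{C}$-construction, which gives $\mathscr{C}\subseteq\mathfrak{P}^{(\mathcal{C},\{0,1\})}$ and completes the verification of arithmetic equivalence.
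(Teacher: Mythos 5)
Your proposal is correct and follows essentially the same route as the paper: both arguments rest on circle inversion, reducing the line--line and line--circle intersection axioms of $\mathcal{RC}$ to compass-only constructions, with the technical details (which the paper defers to the cited thesis) left as the delicate point. Your additional explicit verification of conditions (i) and (ii) of the arithmetic-equivalence definition via Table \ref{cataleg_conjuntsfinals} is a reasonable supplement but does not change the substance of the argument.
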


\begin{proof}
The first step consists in proving that we obtain the same points with $\mathcal RC$ than doing inversions of points respect to circles.
Then we have to prove that any point obtained from an inversion respect to a circle can be constructed with $\mathcal C$ in a finite, arbitrary high, number of steps. The details can be found in \cite{TramunsThesis}.
\end{proof}


\begin{theorem}[Poncelet-Steiner]
\label{poncelet-steiner}
The maps $\textdubf{RC}$ and
$$
\textdubf{RP}:=(\langle\{{\bf Line}\}, \{{\bf LineIntersect,
LineUnitCircleIntersect}\}\rangle,\mathcal{U}_0=\{0, 2, 2i,
X^2+Y^2=1\}).
$$
are equivalent.
\end{theorem}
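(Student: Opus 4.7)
The plan is to establish the set equality $\mathfrak{P}^{\textdubf{RC}} = \mathfrak{P}^{\textdubf{RP}}$ by proving both inclusions. Note that both maps are designed to yield the same arithmetic object (the Euclidean numbers $\mathscr{C}$), and the content of the statement is essentially the classical Poncelet--Steiner theorem rephrased in the language of maps.

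For the inclusion $\mathfrak{P}^{\textdubf{RP}} \subset \mathfrak{P}^{\textdubf{RC}}$, I would first verify that the initial set $\{0, 2, 2i, X^2+Y^2=1\}$ of $\textdubf{RP}$ is constructible inside $\textdubf{RC}$ from $\{0,1\}$: the points $2$ and $2i$ arise by elementary ruler-and-compass doubling (using \textbf{RadiusCircle} and \textbf{LineCircleIntersect}), and the unit circle is produced in a single application of \textbf{RadiusCircle} to the pair $(0,1)$. Since the construction axioms of $\textdubf{RP}$ are all subsumed by those of $\textdubf{RC}$ (with \textbf{LineUnitCircleIntersect} being a particular instance of \textbf{LineCircleIntersect}), every construction in $\textdubf{RP}$ lifts verbatim to one in $\textdubf{RC}$.

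For the reverse inclusion $\mathfrak{P}^{\textdubf{RC}} \subset \mathfrak{P}^{\textdubf{RP}}$, which is the substantive content, I would follow the classical Poncelet--Steiner strategy of simulating every circle-dependent axiom of $\textdubf{RC}$ by means of the ruler together with the single fixed unit circle. The plan proceeds in layers: first recover the point $1$ (and likewise $-1$, $i$, $-i$) as intersections of the coordinate axes, built from $\{0,2,2i\}$, with the unit circle via \textbf{LineUnitCircleIntersect}; this already provides the initial set $\{0,1\}$ of $\textdubf{RC}$. Next, I would construct, for any given pair of points, the parallel and the perpendicular through a third point, using projective techniques that exploit the known center of the unit circle (the classical lemma that a center and a ruler suffice to bisect segments and draw parallels). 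With parallels and perpendiculars in hand, one can then translate and rotate, and in particular transfer arbitrary segments onto any given line through a given origin. The remaining, most delicate, steps simulate \textbf{LineCircleIntersect} and \textbf{CircleIntersect} for arbitrary circles: one reduces these to intersections with the fixed unit circle through an explicit affine change of coordinates (scaling by the radius and translating by the center), using the parallel- and perpendicular-transfer constructions built previously, and one computes intersections of two arbitrary circles via the radical axis, which is a line and hence constructible directly with the ruler once the two circles' data are at hand.

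The main obstacle is keeping track of the order in which the auxiliary constructions must be performed so that every invocation of \textbf{LineUnitCircleIntersect} has its hypothesis satisfied (namely that the line really meets the unit circle), and verifying that the affine transfer reduction for arbitrary circle intersections stays within the constructible set at each stage. Because the detailed sequence of constructions is long but entirely mechanical once the parallel and perpendicular lemmas are available, I would refer the reader to a classical source such as Martin's treatment of the Poncelet--Steiner theorem for the explicit constructions, and merely indicate in the proof how each axiom of $\textdubf{RC}$ is simulated within $\textdubf{RP}$.
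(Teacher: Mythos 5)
Your proposal is correct in outline, but it takes a genuinely different route from the paper. You prove the hard inclusion $\mathfrak{P}^{\textdubf{RC}}\subset\mathfrak{P}^{\textdubf{RP}}$ by the classical geometric simulation: recover $1$ and $i$, build parallels and perpendiculars using the known center of the fixed circle, and then emulate \textbf{LineCircleIntersect} and \textbf{CircleIntersect} for arbitrary circles via affine transfer to the unit circle and radical axes. This is precisely the ``totally geometric proof'' that the paper acknowledges (citing Hartshorne) and deliberately sets aside. The paper instead gives an arithmetic argument: since $1,i\in\mathfrak{P}^{\textdubf{RP}}$ one gets $\mathbb{Q}(i)\subset\mathfrak{P}^{\textdubf{RP}}\subset\mathscr{C}$, and because $\mathscr{C}$ is the smallest field closed under square roots it suffices to show $\mathfrak{P}^{\textdubf{RP}}$ is closed under square roots; this reduces to bisecting angles (via the inscribed-angle theorem on the unit circle) and to extracting $\sqrt{r}$ for $r>0$ via the identity $\sqrt{r}=\bigl(\frac{r+1}{2}\bigr)\sqrt{1-\bigl(\frac{r-1}{r+1}\bigr)^2}$, which needs only a single point read off the unit circle. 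The trade-off: your approach is axiom-by-axiom and would in fact establish a stronger, construction-level equivalence (every $\textdubf{RC}$ construction is simulable), at the cost of a long mechanical verification of parallels, perpendiculars, dilations, and hypothesis-checking for \textbf{LineUnitCircleIntersect}; the paper's approach proves only the set equality $\mathfrak{P}^{\textdubf{RP}}=\mathfrak{P}^{\textdubf{RC}}$ --- which is all that ``equivalent maps'' requires here --- with far less geometric bookkeeping, by leaning on the field-theoretic characterization of $\mathscr{C}$. One caution on your version: the claim that the radical axis is obtainable ``directly with the ruler'' still presupposes the parallel/perpendicular and segment-transfer machinery, so it belongs after, not alongside, those lemmas; as written the outline is fine provided you keep that dependency explicit when filling in details from Martin.
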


\begin{proof} 
Of course, a totally geometric proof can be given (\cite[page 192]{Har}), but we present here an arithmetic proof in our language, following \cite[page 98]{Martin}.
It is clear that $1,i\in \mathfrak{P}^{\textdubf{RP}}$, and thus $\mathbb{Q}(i)=\mathfrak{P}^{\textdubf {R}}\subset\mathfrak{P}^{\textdubf {RP}}\subset\mathfrak{P}^{\textdubf {RC}}=\mathscr{C}$. Since $\mathscr{C}$ is the smallest extension of $\mathbb{Q}$ closed under square roots, it is sufficient to see that $\mathfrak{P}^{\textdubf{RP}}$ is closed under square roots. The construction of the square roots of a complex number reduces to the construction of the bisector of two lines through the origin and the construction of the square root of a positive real number.

Given two points $A, B\in\mathfrak{P}^{\textdubf{RP}}$ on the unit circle, let $C$ be the second point of intersection of the diameter through $A$ with the unit circle. Then the angle $AOB$ is twice the angle $OCB$.
The equality
$$
\sqrt{r}=\left(\frac{r+1}2\right)\sqrt{1-\left(\frac{r-1}{r+1}\right)^2}
$$
shows that we only need the construction of the square root of numbers of the form  $1-c^2$ with $c\in(-1,1)$, and this consists in constructing a point on the circle having $1-c^2$ as a x-coordinate. This can be done constructing the perpendicular to the x-axis through point $1-c^2$. An example of the construction of this perpendicular is described in \cite[pages 79--80]{Martin}.
\end{proof}


Finally, using the sets of constructible points of maps in Table \ref{cataleg_conjuntsfinals}, we can deduce the following relations:

\begin{theorem} Arithmetical classification of tools:
\label{classificacioaritmeticaeines}
\begin{itemize}
\item[i)] $ \mathcal C\eqar\mathcal RC\eqar\mathcal EC\eqar \mathcal REC$.
\item[ii)] $ \mathcal O\eqar \mathcal CO$.
\end{itemize}
\end{theorem}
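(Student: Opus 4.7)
The plan is to take the common initial set $\mathcal{U}_0=\mathcal{U}_0'=\{0,1\}$ for every tool appearing in the statement, and to invoke Table~\ref{cataleg_conjuntsfinals} to dispatch condition (i) of the definition of $\eqar$ at once. For part (i), the table records that each of the four maps $(\mathcal{C},\{0,1\})$, $(\mathcal{RC},\{0,1\})$, $(\mathcal{EC},\{0,1\})$ and $(\mathcal{REC},\{0,1\})$ has constructible point set equal to the Euclidean field $\mathscr{C}$; for part (ii), both $(\mathcal{O},\{0,1\})$ and $(\mathcal{CO},\{0,1\})$ produce the origami field $\mathscr{O}$. Since $\mathscr{C}$ and $\mathscr{O}$ are infinite extensions of $\mathbb{Q}$, all the maps in sight are infinite, and they are pairwise equivalent as maps.

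What remains is condition (ii): every axiom of each tool must actually be applied at some finite layer during the generation of $\mathcal{U}^{\textdubf M}$. I would verify this tool by tool. For $\mathcal{EC}$ the axioms $\mathbf{Circle}$ and $\mathbf{CircleIntersect}$ are forced at layers $1$ and $2$ respectively, since otherwise no new point could ever be produced from $\{0,1\}$. For $\mathcal{C}$ the additional axiom $\mathbf{RadiusCircle}$ becomes applicable as soon as three distinct points are available, and it is exploited when transferring a radius in the compass proof of Mohr--Mascheroni (Theorem~\ref{Mohr-Mascheroni}). For $\mathcal{RC}$ and $\mathcal{REC}$, the ruler axioms $\mathbf{Line}$, $\mathbf{LineIntersect}$ and $\mathbf{LineCircleIntersect}$ all fire as soon as two points together with at least one line and one circle coexist, which occurs at layer $2$.

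The main obstacle is the analogous verification for the Conics tool $\mathcal{CO}$. The axioms $\mathbf{Conic}$, $\mathbf{ConicLineIntersect}$, $\mathbf{ConicCircleIntersect}$ and $\mathbf{ConicIntersect}$ all require at least one conic, hence five points in general position, to become applicable, so they only enter the construction after sufficiently many ruler-and-compass layers have accumulated. I would make this explicit by exhibiting, at some finite layer $n_0$, five points in general position (easy once enough of $\mathscr{C}\cap\mathbb{R}$ has materialized) so that $\mathbf{Conic}$ fires at layer $n_0+1$; then, by choosing two non-tangent conics together with a transverse line and circle, the three remaining conic-intersection axioms all fire at layer $n_0+2$. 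Once every axiom is shown to be triggered, condition (ii) holds for every map, and the arithmetic equivalences asserted in parts (i) and (ii) follow directly from the equalities of point sets recorded in Table~\ref{cataleg_conjuntsfinals}.
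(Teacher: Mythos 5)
Your proposal is correct and follows essentially the same route as the paper, which justifies the theorem simply by reading off the coincidences of the constructible-point sets $\mathscr{C}$ and $\mathscr{O}$ from Table~\ref{cataleg_conjuntsfinals} for the initial set $\{0,1\}$; your additional layer-by-layer verification of condition ii) of the definition of $\eqar$ is extra diligence that the paper leaves implicit. One minor correction: the axiom $\mathbf{Conic}$ as defined in the Annex takes a directrix, a focus and two points fixing the eccentricity, not five points in general position, so the conic axioms of $\mathcal{CO}$ become applicable even earlier than you argue.
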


\section{Conclusions}

We have introduced a new formal language and illustrated it with the most common geometric instruments, even though a more extended study can be found in \cite{TramunsThesis}.  Some of the instruments presented there, such as the marked ruler or the marked ruler and compass, require a precise description of the {\em neusis} process and lead to interesting axioms, involving curves as the conchoid of Nicomedes or the Lima\c{c}on of Pascal. 

The language proposed has the advantage that it is open, in the sense that other instruments different that those we have considered can be studied and formalized in this way: it suffices to analyze the axioms they can perform and define them using curves and points. After that, the geometric and arithmetic relations between this instrument and other existing instruments can be studied.

Finally, another significant advantage of this language is the possibility of introducing virtual tools, that is, tools not necessarily attached to any physical instrument: we can choose some existing axioms and combine them to create a virtual tool. These kind of tools can be both interesting on their own and useful as auxiliary resources for studying known instruments.

\newpage
\section*{Annex}
\label{TableAnnex}
\begin{center}
\begin{tabular}{|p{6.5cm}|p{6.5cm}|}
\hline
Axiom & Description\\
\hline&\\
$\ell= $\textbf{Line}$(A,B)$ & Line through points $A$, $B$.
\\&\\
$c= $\textbf{Circle}$(A,B)$ & Circle with center $A$ through
 $B$.
\\&\\
$c= $\textbf{RadiusCircle}$(A,B,C)$ & Circle with center
$A$ and radius the distance~$BC$.    
\\&\\
$\ell_1,\ell_2= $\textbf{Bisector}$(\ell,\ell')$ & Bisectors of the angle formed by the
lines $\ell$ and $\ell'$.
\\&\\
$\ell= $\textbf{PerpendicularBisector}$(A,B)$ & Perpendicular bisector of the
segment $AB$.\\&\\
$\ell'= $\textbf{Perpendicular}$(\ell,P)$ & Line perpendicular to line $\ell$ passing through point $P$.
\\&\\
$\ell= $\textbf{PointPerpendicular}$(A,B,C)$ & Line perpendicular to the segment $AB$ through~$C$.
\\&\\
$\ell_1,\ell_2= $\textbf{Tangent}$(\ell,F,A)$ & Tangents through $A$ to the
parabola with directirx $\ell$ and focus $F$.
\\&\\
$\ell_1,\ell_2,\ell_3= $\textbf{CommonTangent}$(\ell,F,\ell',F')$ &
Common tangents to the parabola with directrix $\ell$ and
focus $F$ and the parabola with directrix $\ell'$ and focus $F'$.
\\&\\
$\ell= $\textbf{PerpendicularTangent}$(\ell_1,F,\ell_2)$ & The tangent line
 to the parabola with directrix $\ell_1$ and focus $F$ which is
perpendicular to the line $\ell_2$.
\\&\\
$c= $\textbf{Conic}$(\ell,F,A,B)$ & Conic with directrix $\ell$,
focus $F$ and excentricity the distance between $A$ and $B$.
\\&\\
\hline\hline&\\
$P= $\textbf{LineIntersect}$(\ell,\ell')$& Intersection point of lines
 $\ell$ and $\ell'$.
\\&\\
$P_1, P_2= $\textbf{CircleIntersect}$(c,c')$& Intersection points  of circles
 $c$ and $c'$.
\\&\\
$P_1, P_2= $\textbf{LineCircleIntersect}$(\ell,c)$ & Intersection points of line
 $\ell$ with circle $c$.
\\&\\
$P_1,...,P_4= $\textbf{ConicIntersect}$(c,c')$ & Intersection points of conics
 $c$, $c'$.
\\&\\
$P_1, P_2= $\textbf{LineConicIntersect}$(\ell,c)$ & Intersection points of line
$\ell$ with conic $c$.
\\&\\
$P_1,\dots,P_4= $\textbf{CircleConicIntersect}$(c,c')$ & Intersection points of circle
 $c$ with conic $c'$.
\\&\\
\hline

\end{tabular}

\end{center}

For the axioms generating several objects, one has to specify an ordering to properly identify each of them. When there is no natural ordering, we use a {\em radial sweep}, a common technique in computational geometry. It consists in sweeping counterclockwise the plane with a given half-line; the points are ordered in the order they are met. For the axioms described in this table we propose the following ordering:

\vskip 3mm
$\textbf{Bisector}$: Line $\ell_1$ bisects the oriented angle $\widehat{\ell\ell'}$ and $\ell_2$ bisects $\widehat{\ell'\ell}$.

\vskip 3mm
$\textbf{Tangent}$: The  lines $\ell_1$ and $\ell_2$ are ordered following a radial sweep with center $A$ and half-line $AF$.

\vskip 3mm
$\textbf{CommonTangent}$: The lines  $\ell_1$, $\ell_2$, $\ell_3$  are ordered according to the order of their contact points with the first parabola in a  radial sweep with center $F$ and half-line $FF'$.

\vskip 3mm
$\textbf{CircleIntersect}$: Let $C$ and $C'$ be the centers of circles $c$ and $c'$ respectively. The order of points $P_1$ and $P_2$ is given by a radial sweep with center $C$ and half-line $CC'$. We use the same criterion to order the output of axioms $\textbf{ConicIntersect}$ and $\textbf{CircleConicIntersect}$, taking as the center of a conic the midpoint of the segment defined by the focus.

\vskip 3mm
$\textbf{LineCircleIntersect}$: If $\ell$ is not a diameter of circle $c$, we order $P_1$ and $P_2$ to assure that the angle $P_2CP_1$ is positive. If the line is a diameter, we order points $P_1, P_2$ as points on the line $\ell$, using a radial sweep with center $C$ and half-line $CO$. We use the same criterion to order the output of the axiom $\textbf{LineConicIntersect}$.

\bibliographystyle{amsalpha}

\begin{thebibliography}{A}
\bibitem{Alperin} R. C. Alperin. A Mathematical Theory of Origami Constructions and Numbers. {\it New York Journal of Mathematics}, {\bf 6}, 119--133 (2000).
\bibitem{AlperinLang} R. C. Alperin and R. Lang. One-, Two-, and Multi-Fold Origami Axioms. {\it In Origami$^4$}, 371--393, (2009).
\bibitem{Baragar} A. Baragar. Constructions using a Compass and Twice-Notched Straightedge. {\it The American Mathematical Monthly}, {\bf 109}, 151--164 (2002).
\bibitem{Geretschlager} R. Geretschlager. Euclidean Constructions and the Geometry of Origami. {\it Mathematics Magazine}, {\bf 68}, 357--371 (1995).
\bibitem{Cox} D. A. Cox. Galois Theory. Wiley, Hoboken. New Jersey (2004).
\bibitem{Har} R. Hartshorne. Geometry: Euclid and beyond. Springer-Verlag. New-York (2000).
\bibitem{Huzita} H. Huzita. Axiomatic Development of Origami Geometry. {\it Proceedings of the First International Meeting of Origami Science and Technology}, 143–-158 (1989).
\bibitem{Justin} J. Justin. {\it British Origami}, {\bf 107}, 14--15 (1984).
\bibitem{Martin} George E. Martin. Geometric Constructions. Springer-Verlag. New-York (1998).
\bibitem{TramunsThesis}  E. Tramuns, Ph.D. thesis \emph{A formalization of geometric constructions}, Universitat Polit\`ecnica de Catalunya, ~2012.
\bibitem{Tramuns} E. Tramuns. The Speed of Origami versus other construction tools. {\it Origami$^5$}, 531--542 (2010).
\bibitem{Videla} C. Videla. On points Constructible from Conics. {\it The Mathematical Intelligencer}, {\bf 19}, 53--57 (1997).
\end{thebibliography}

\newpage

\end{document}